\DeclareMathOperator{\Ker}{Ker}
\DeclareMathOperator{\Aut}{Aut}
\DeclareMathOperator{\Abel}{Abel}
\newcommand{\PP}{\mathbb{P}}
\newcommand{\ZZ}{\mathbb{Z}}
\newcommand{\BB}{\mathbb{B}}
\newcommand{\CC}{\mathbb{C}}
\newtheorem{theorem}{Theorem}
\newtheorem{corollary}[theorem]{Corollary}
\newtheorem{proposition}[theorem]{Proposition}
\newtheorem{remark}[theorem]{Remark}
\newtheorem{conjecture}[theorem]{Conjecture}
\title{On a sequence of singular ball quotient surfaces on the line $K^2=9\chi-18$}
\author{Carlos Rito, Xavier Roulleau}
\date{}
\begin{document}
\maketitle

\begin{abstract}
Starting from computer experiments with the fundamental group of the Cartwright–Steger surface,
we construct an infinite tower \((X_n)_{n\ge1}\) of normal projective surfaces obtained
by successive \(\ZZ/3\)-Galois covers \mbox{\(X_{n} \to X_{n-1}\)}.
For $n>1$, their minimal resolutions \(\widetilde{X}_n\) lie on the line \(K^2 = 9\chi - 18\)
(equivalently \(c_1^2 = 3c_2 - 72\)), which is parallel to the Bogomolov--Miyaoka--Yau line
\(K^2 = 9\chi\) of ball quotients. We compute the fundamental groups for the first cases,
showing that \(\pi_1(\widetilde{X}_n)=1\) for $n=1,\ldots,5$.
Motivated by the geometry of the construction, we conjecture that all \(\widetilde{X}_n\) are simply connected.
\medskip

\noindent{Keywords:}
surfaces of general type; ball quotient surfaces; fundamental groups; simply connected surfaces.

\medskip

\noindent{2020 MSC:}
Primary~14J29; Secondary~14Q10,~20F05,~20F34.
\end{abstract}

\section{Introduction}

For smooth complex algebraic surfaces $S$ of general type, the Bogomolov--Miyaoka--Yau inequality $K^2\leq 9\chi$
(equivalently $c_1^2\leq 3c_2$) holds, where $K^2$ is the self-intersection of a canonical divisor of $S$
and $\chi$ is the holomorphic Euler characteristic of $S.$ Surfaces on the line $K^2=9\chi$ are known to be
quotients of the complex unit ball $\mathbb B$ by a lattice $\Lambda\subset PU(2,1).$
They are rigid surfaces with infinite fundamental group.
A long-standing problem in the geography of surfaces of general type is to determine whether there exists a
neighborhood of this line in which the fundamental group must be infinite and, in particular,
how closely the line can be approached by simply connected surfaces.
An important step has been taken by Roulleau and Urz\'ua \cite{RU},
who constructed a sequence of simply connected surfaces $Z_n$ with 
\[ \lim_{n}\frac{K_{Z_n}^2}{\chi(Z_n)}=9.\]
However, this is not optimal, because $\lim_{n} \left(9\chi(Z_n)-K_{Z_n}^2\right)=+\infty$,
thus a substantial region near the Bogomolov--Miyaoka--Yau line remains unexplored for the existence of
simply connected examples.

Surfaces of general type with invariants $K^2=9\chi=9$ and $p_g=0$ (hence $q=0$) are the so-called {\em fake projective planes}.
There are $50$ pairs of complex-conjugated such surfaces,
according to the results from the work of Prasad and Yeung \cite{PY1}, \cite{PY2},
and Cartwright and Steger \cite{CS}.

In their computational classification, Cartwright and Steger \cite{CS} also found the construction of a remarkable surface
$S_1$ with invariants
\[K_{S_1}^2=9,\ p_{g}(S_1)=q(S_1)=1.\]
This ball quotient surface is given by a lattice $\Lambda\subset PU(2,1)$ which is a sublattice of a maximal lattice
$\bar{\Gamma}$ constructed by Mostow, which is a Deligne-Mostow group whose associated
weights $(2,2,2,7,11)/12$ satisfy the condition $\Sigma(INT)$ (see \cite{Mostow}).
More recently, an explicit construction of this surface has been given by Borisov and Yeung \cite{BY}.

In this paper, we start from the Cartwright--Steger surface and,
guided by computer experiments with its fundamental group,
we uncover an infinite sequence $(X_n)$ of normal projective surfaces
lying on the line $K^2 = 9\chi - 18$ for $n > 1$.
Our computations show that the first five smooth minimal resolutions
\(\widetilde{X}_1,\dots,\widetilde{X}_5\) are simply connected. More
experiments then point to a simple geometric description: each
\(X_n\) arises as a Galois triple covering of \(X_{n-1}\), produced by a uniform procedure.
These findings motivate the central conjecture of the paper that all \(\widetilde{X}_n\) are simply
connected.

All computations were performed in Magma \cite{BCP}.
A reproducible script certifying our claims is available as an arXiv ancillary file.

\bigskip 
\noindent\textbf{Acknowledgments.}
The first author was financed by Portuguese Funds through FCT
(Funda\c c\~ao para a Ci\^encia e a Tecnologia) within the Project
UID/00013/ 2025: Centro de Matem\'atica da Universidade do Minho (CMAT/UM).
The second author was supported by the Centre Henri Lebesgue ANR-11-LABX-0020-01.

\section{Experiments with the Cartwright--Steger fundamental group}

Let \(\BB\subset\CC^2\) be the unit ball. Consider the
Cartwright--Steger ball quotient surface \(S_1=\BB/H_1\) and its $\ZZ/3$-quotient
\(X_1=\BB/G_1\), where \(H_1\) has index \(3\) in \(G_1\), and both
$H_1,G_1$ are subgroups of a maximal lattice $\bar\Gamma$.
The covering map \(S_1\to X_1\) is ramified at isolated points corresponding to
order-\(3\) elliptic elements in \(G_1\).

Starting from the finite presentation of \(\bar\Gamma\) given in \cite{CKYshort},
we searched, using Magma, for two elements that generate \(\bar\Gamma\) and yield short defining
relators after simplification. This led to the concise presentation
\[
\bar\Gamma \ \cong\ 
\Big\langle\, u,w \ \Big|\ u^3,\ w^3,\ (u,\ w u w^{-1} u w),\ (uw)^8 \,\Big\rangle,
\]
where \((x,y)=x^{-1}y^{-1}xy\).

We then searched for short words in \(u,w\)
that generate an index 288 subgroup isomorphic to the group \(G_1\le\bar\Gamma\) corresponding to \(X_1\).
We found that \(G_1\) is generated by the three conjugates
\[
G_1 \cong\ \big\langle\; w^{\,u w},\ \ w^{\,w^u},\ \ w^{\,(u,\,w^{-1})}\ \big\rangle
\ \le\ \bar\Gamma,
\]
where \(x^g=g^{-1}xg\).

The invariants of the abelianization of $G_1$ are $[3,3],$
i.e. $$\Abel(G_1):=G_1/[G_1,G_1]\cong (\ZZ/3)^2.$$

To find the group $H_1$ we compute all index-3 normal subgroups of $G_1.$
We get four subgroups. Their abelianizations have invariants $[0,0], [3,21], [3,3], [3,3]$.
The first one is $H_1.$

Let $G_2$ be one of the subgroups with invariants $[3,3]$, say the one containing the generator $w^{\,u w}.$
The computations show that $G_2$ also has four index-3 normal subgroups, whose abelianization invariants are
\begin{equation}\label{abinv}
[7,0,0],\ [3,3],\ [3,3],\ [3,3].
\end{equation}
We check computationally that the first one is an index-3 subgroup of $H_1$,
while the other three are conjugate subgroups.

Iterating this procedure we get a chain of nine successive index-3 subgroups
$$G_1\geq G_2\geq\cdots \geq G_{10},$$
at each step exhibiting the same subgroup pattern as in \eqref{abinv}.
For each $G_i$ there is an index-9 normal subgroup $H_{i+1}$ such that
$$G_i/H_{i+1}\cong(\ZZ/3)^2.$$
We check that $$H_{i+1}=[G_i,G_i],\ \ i=1,\ldots 9.$$

Due to computer limitations, our computations do not go further,
and we ask whether this index-3 subgroup pattern persists indefinitely.

\[
\begin{tikzcd}[row sep=1.0em, column sep=3em]
\vdots \arrow[r, phantom] \arrow[d] & \vdots \arrow[d] \\
H_{2}  \arrow[r] \arrow[d] & G_{2}  \arrow[d] \\
H_{1}  \arrow[r]            & G_{1}
\end{tikzcd}
\]

To answer this, we looked for a much simpler group that still exhibits the
same index-\(3\) subgroup pattern. A short randomized search among
two–generator, small–relator groups pointed to the Euclidean triangle group
\[
T \;=\; \langle\, x,y \mid x^{3},\ y^{3},\ (xy)^{3}\,\rangle .
\]
The group $T$ contains an index $3$ subgroup isomorphic to $T$, thus it
has an infinite structure of index-3 subgroups similar to the one above,
with abelianizations
$$[0,0],\ [3,3],\ [3,3],\ [3,3].$$

We then used Magma to find a surjective homomorphism $$h:G_1\twoheadrightarrow T,$$
which then implies that $G_1$ also has an infinite structure of index-3 subgroups as above.

From the map $h$ we extracted order-3 generators $a,b,c,d$ of $G_1$ such that
$$h(a)=h(b)=h(c)=x \in T,\ \ h(d)=y\in T.$$
In particular, $ab^{-1}$ and $bc^{-1}$ are in the kernel of $h.$

We wonder if $\Ker(h)$ is normally generated by these two elements.
Accordingly, set
$$Q\ :=\ G_1\ \big/\ \big\langle\!\big\langle\, ab^{-1},\,bc^{-1}\,\big\rangle\!\big\rangle^{G_1}.$$
A Magma computation shows that
$$Q\ \cong\ T'\ :=\ \big\langle\, x,y\ \big|\ x^{3},\ y^{3},\ (xy)^{3}(yx)^{3}\,\big\rangle,$$
which still exhibits the same index-3 normal-subgroup pattern as $T$.

So, we have seen that there is an infinite commutative diagram
\[
\begin{tikzcd}[row sep=1.0em, column sep=3em]
\vdots \arrow[r, phantom] \arrow[d] & \vdots \arrow[d] \\
S_{2}  \arrow[r] \arrow[d] & X_{2}  \arrow[d] \\
S_{1}  \arrow[r]            & X_{1}
\end{tikzcd}
\]
where the arrows denote $\ZZ/3$-coverings of surfaces.

The group $G_1$ is generated by elements of order 3,
hence by Armstrong theorem \cite{Armstrong},
the smooth minimal model $\tilde X_1$ of the surface $X_1$ is simply connected.
What can be said about $X_2, X_3,\ldots$?

Using Magma, we show that
$$G_{i+1}\ :=\ \big\langle\!\big\langle\, a,b,c\,\big\rangle\!\big\rangle^{G_i},\ \ i=1,2,3$$
(the normal closure of $\big\langle\, a,b,c\,\big\rangle$ in $G_i$).
Beyond this point, the presentations grow rapidly in length and complexity, and our computations stall.

To handle $G_5$ we use a workaround:
we compute the quotient
$$J\ :=\ G_5\ \big/\ \big\langle\!\big\langle\, a,b,c\,\big\rangle\!\big\rangle^{G_4}$$
and use the Magma function \texttt{SearchForIsomorphism} to check if it is isomorphic to $\ZZ/3.$
In the process, Magma reports (twice) that a generator of $J$ is trivial.
We append this relation and the algorithm confirms that $J\ \cong\ \ZZ/3.$

Summing up, the groups $G_1,\ldots,G_5$ are generated by elements of order 3,
which imply that the smooth minimal surfaces $\tilde X_1,\ldots,\tilde X_5$ are simply connected.

\section{The fibration on the surface $X_1$}\label{CSsurface}

Our references for this section are \cite{CS} and \cite{CKYshort}.

The Cartwright-Steger surface is $S_1:=\mathbb B/H_1$, with invariants 
$K^2=9\chi=9,$ $p_{g}=q=1.$
The group $G_1$ is the normalizer of $H_1$ in $\bar\Gamma.$
The automorphism group $\sigma:=\Aut(S_1)$ is $\ZZ/3\cong G_1/H_1.$
The quotient surface $X_1:=S_1/\sigma$ has $9$ singularities:
$3$ points $O_{1}',O_{2}',O_{3}'$ of type $\frac{1}{3}(1,1),$
and $6$ points $Q_{1}',\ldots,Q_{6}'$ of type $\frac{1}{3}(1,2)$
($A_{2}$ singularities, or ordinary {\em cusps}).

Denote by $O_i, Q_j\in S_1$ the fixed points of $\sigma$ corresponding to $O_i',$ $Q_j',$ respectively.
The Albanese fibration $\alpha:S_1\to E$ has three fibers $\bar F_1,$ $\bar F_2,$ $\bar F_3$
such that $O_1,O_2,O_3\in\bar F_1,$ $Q_1,Q_2,Q_3\in\bar F_2$ and $Q_4,Q_5,Q_6\in\bar F_3.$
Moreover, $\bar F_1,$ $\bar F_2,$ $\bar F_3$ are reduced.
The invariants of the smooth minimal resolution $\tilde X_{1}$ of $X_1$ are
\[K_{\tilde X_{1}}^2=\chi(\tilde X_{1})=2,\ p_g(\tilde X_{1})=1,\ q(\tilde X_{1})=0.\]

By the functorial properties of the Albanese map, the automorphism
$\sigma$ acts on the elliptic curve $E$ through an automorphism
$\sigma_{E}$, which has $3$ fixed points, and $\alpha$ induces a fibration
\[f:\tilde X_{1}\longrightarrow\PP^{1}=E/\sigma_{E}.\]

Let $F_{i}$ and $D_i$ be the total and strict transform in $\tilde X_1,$ respectively,
of the image of $\bar F_i$ on $X_1:=S_1/\sigma.$

\begin{proposition}
\label{thm:The-fibration-}
The fibers $F_{1},F_{2},F_{3}$ have the following configuration:
\begin{itemize}
\item $F_{1}=3D_{1}+B_{1}+B_{2}+B_{3},$
\item $F_{2}=3D_{2}+2(A_{1}+A_{2}+A_{3})+A_{1}'+A_{2}'+A_{3}',$
\item $F_{3}=3D_{3}+2(A_4+A_5+A_6)+A_4'+A_5'+A_6',$
\end{itemize}
where $B_i$ is the $(-3)$-curve which resolves the singularity $O_i',$ $i=1,2,3,$ and
$A_i,$ $A_i'$ are the $(-2)$-curves which resolve the singularity $Q_i',$ $i=1,\ldots,6.$
\end{proposition}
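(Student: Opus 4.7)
The plan is to compute $F_i=f^{*}p_i$, where $p_1,p_2,p_3\in\PP^1$ are the branch points of $\pi_E\colon E\to E/\sigma_E=\PP^1$, by descending the Albanese map to a morphism $\bar\alpha\colon X_1\to\PP^1$, identifying $\bar\alpha^{*}p_i$ as a Weil divisor on $X_1$, and then carrying out a local analysis at each of the nine singular points of $X_1$ in order to compute $f^{*}p_i=\tilde\pi^{*}\bar\alpha^{*}p_i$.

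First, I would pin down the multiplicity of the strict transform. Since $\pi_E$ is totally ramified at each $e_i$, we have $\pi_E^{*}p_i=3e_i$ on $E$, hence $\alpha^{*}\pi_E^{*}p_i=3\bar F_i$ on $S_1$. The cover $\pi\colon S_1\to X_1$ is \'etale outside the nine $\sigma$-fixed points, and $\pi|_{\bar F_i}\colon\bar F_i\to\bar D_i$ is a degree-three quotient (the action of $\sigma$ preserves $\bar F_i$ and is nontrivial, as seen from its nontrivial character on the tangent spaces at the fixed points). Commutativity of $\pi_E\circ\alpha=\bar\alpha\circ\pi$, together with the fact that $\bar\alpha^{*}p_i$ is Cartier, then forces $\bar\alpha^{*}p_i=3\bar D_i$ as Weil divisors on $X_1$; in particular $D_i$ appears in $F_i$ with multiplicity $3$.

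Next comes the local analysis. At each $O_i$, the linearisation of $\sigma$ on $T_{O_i}S_1$ is the scalar $\zeta\cdot\Id$, so every tangent line is a $\sigma$-eigenline; the image of $\bar F_1$ near $O_i'$ in the local model (the cone over the twisted cubic, cut out by the invariants $x^{3},x^{2}y,xy^{2},y^{3}$) is therefore a ruling, whose strict transform meets the $(-3)$-curve $B_i$ transversally at one point, giving $D_1\cdot B_i=1$. At each $Q_i$, the linearisation of $\sigma$ is $\mathrm{diag}(\zeta,\zeta^{2})$, so the tangent to the $\sigma$-invariant curve $\bar F_2$ (resp.\ $\bar F_3$) must lie along one of the two eigenlines; labelling the two $(-2)$-curves of the $A_2$-resolution so that $A_i$ corresponds to that eigendirection, the strict transform of $\bar F_2$ meets only $A_i$, transversally, giving $D_2\cdot A_i=1$ and $D_2\cdot A_i'=0$ (analogously for $D_3$).

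Finally, writing $F_1=3D_1+\sum_j n_j B_j$ and $F_2=3D_2+\sum_i(m_i A_i+m_i'A_i')$ and imposing $F_1\cdot B_j=0$ together with $F_2\cdot A_i=F_2\cdot A_i'=0$, with $B_j^{2}=-3$, $A_i^{2}=A_i'^{2}=-2$, $A_i\cdot A_i'=1$, the intersection equations have the unique integer solution $n_j=1$, $m_i=2$, $m_i'=1$. The same argument applied to $F_3$ gives the stated formula. The delicate point is the local geometry at the cusps: one must verify that $\bar F_2$ and $\bar F_3$ are analytically smooth at each $Q_i$ with tangent along exactly one $\sigma$-eigendirection, as this is precisely what produces the asymmetric coefficient pattern $(2,1)$ rather than $(1,1)$ or $(2,2)$. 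This should follow from the reducedness of the $\bar F_i$ combined with the fact that the $\sigma$-fixed points on each $\bar F_i$ are isolated with a nontrivial character on the tangent direction, but ruling out tangency to both axes or a non-reduced branch at some $Q_i$ is where the main work lies.
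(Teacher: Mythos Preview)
Your approach matches the paper's almost step for step: the multiplicity~$3$ on $D_i$ comes from the $\ZZ/3$-covering structure, the intersection numbers $D_j\cdot B_i$, $D_j\cdot A_i$, $D_j\cdot A_i'$ are the key input, and the coefficients are then forced by $F_j\cdot C=0$ for each exceptional component~$C$. The one ingredient you are missing---and which you yourself flag as ``the delicate point''---is the smoothness of the Albanese fibres $\bar F_1,\bar F_2,\bar F_3$ at the $\sigma$-fixed points. The paper does not attempt the local verification you sketch; it simply imports from \cite[Section~5]{R} the global fact that the fibres of the Albanese map of $S_1$ are smooth, and this immediately yields $D_1\cdot B_i=1$, $D_2\cdot A_i=1$, $D_2\cdot A_i'=0$ (after a relabelling $A_i\leftrightarrow A_i'$).

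Your proposed justification via reducedness and isolated fixed points does not suffice on its own: a reduced $\sigma$-invariant curve can perfectly well have an ordinary node at $Q_i$ with its two branches along the two eigenlines of $\sigma$, which would give $D_2\cdot A_i=D_2\cdot A_i'=1$ and force coefficients $(3,3)$ rather than $(2,1)$. The same caveat applies at the points $O_i$, where your claim $D_1\cdot B_i=1$ tacitly assumes $\bar F_1$ has a single smooth branch there. So the gap you identify is genuine, and the paper's resolution is to cite the smoothness result rather than reprove it locally.
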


\begin{proof}
We have a commutative diagram
\[
\begin{CD}
\tilde S_1     @>\pi>>  S_1\\
@V\psi VV        @VV\sigma V\\
\tilde X_1     @>\varphi>>  X_1
\end{CD}
\]
where $\varphi$ is the minimal resolution of the singularities of $X_1,$ the map $\psi$ is a
$\ZZ/3$-covering ramified over the exceptional divisors from $\varphi,$ and
$\pi$ is a sequence of blowups centered at the fixed points of $\sigma.$
Let $\tilde F_i:=\pi^*(\bar F_i),$ $i=1,2,3.$ The pullback of a fiber in $\tilde X_1$ is a union of three fibers
in $\tilde S_1,$ except for $\psi(F_i)=3\tilde F_i,$ $i=1,2,3.$ This implies that every component of a fiber $F_i$ which
is not contained in the branch locus of $\psi$ must be of multiplicity $3.$
We have then
\begin{align*}
F_1&=3D_1+\textstyle\sum_1^3 a_{1i}B_i,\\
F_2&=3D_2+\textstyle\sum_1^3(a_{2i}A_i+b_{2i}A_i'),\\
F_3&=3D_3+\textstyle\sum_4^6(a_{3i}A_i+b_{3i}A_i'),
\end{align*}
for some effective divisors $D_1,D_2,D_3,$ and $a_{ji},b_{ji}\in\mathbb N.$

According to \cite[Section~5]{R}, the fibers of the Albanese map of $S_1$ are smooth.
This implies $B_iD_1=1,$ $i=1,2,3,$ and $A_iD_2=1,$ $A_i'D_2=0,$ $i=1,2,3$ (possibly relabeling $A_i\leftrightarrow A_i'$).
Now $F_1B_i=F_2A_i=F_2A_i'=0,$ for $i=1,2,3,$ gives $a_{1i}=1,$ $a_{2i}=2,$ $b_{2i}=1,$ for $i=1,2,3.$
The configuration of $F_3$ is analogous to the one of $F_2.$

\end{proof}

\section{Geometric construction of the tower}\label{sec:Geometrie}

\subsection{Basics on Galois triple covers}
Our reference here is \cite{Tan1}.

Let $\tilde X$ be a smooth surface. A Galois triple cover $\pi:\tilde Y\rightarrow \tilde X$ is determined by
divisors $L,$ $M,$ $B$, $C$ on $\tilde X$ such that $B\in |2L-M|$ and $C\in |2M-L|$.
The branch locus of $\pi$ is $B+C$ and $3L\equiv 2B+C,$ $3M\equiv B+2C$
(we say that $2B+C$ and $B+2C$ are {\em $3$-divisible}). The surface $\tilde Y$
is normal iff $B+C$ is reduced. The singularities of $\tilde Y$ lie over the singularities of $B+C$.

Now suppose that $\tau:\tilde X\rightarrow X$ is the minimal resolution of a normal surface $X$ with a set
$\{s_1,\ldots,s_{\bar n}\}$ of ordinary cusps, a set $\{q_1,\ldots,q_{\bar m}\}$ of singularities of type $\frac{1}{3}(1,1),$
and no other singularities.
Let $B_i:=\tau^{-1}(q_i)$ be the $(-3)$-curve which resolves $q_i,$ $i=1,\ldots,\bar m.$
If the $(-2)$-curves $A_i,$ $A_i'$ satisfying $\tau^{-1}(s_i)=A_i+A_i'$ can be
relabeled such that
\[\sum_1^{\bar n} (2A_i+A_i')+\sum_1^{\bar m} B_i\equiv 3J,\]
for some divisor $J,$ then we say that the singular set of $X$ is $3$-divisible.
We have a commutative diagram
\[
\begin{array}{ccc}
\tilde Y & \longrightarrow & Y\\
\downarrow &  & \downarrow\\
\tilde X & \longrightarrow & X
\end{array}
\]
where the vertical arrows over $X$ and $\tilde X$ are Galois triple covers ramified over the singular set of $X$ and
over its resolution, respectively. The surface $Y$ is smooth and $\tilde Y\to Y$ is a sequence of blowups.

\begin{proposition}\label{invariants}
If\ $\bar n=3n$ and\ $\bar m=3m,$ with $n,m\in\mathbb N,$ then
\[
\chi(Y)=3\chi(\tilde X)-2n-m, \ \ \ K_{Y}^{2}=3K_{\tilde X}^{2}+3m.
\]
\end{proposition}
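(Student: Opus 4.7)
The plan is to combine the Tan-style triple-cover formula for $\chi(\mathcal O)$ with a direct discrepancy computation on the base $X$, bypassing a detailed analysis of $\tilde Y$.

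First I would pin down the branch data. From the $3$-divisibility hypothesis, set
\[
B := \sum_{i=1}^{\bar n} A_i, \qquad C := \sum_{i=1}^{\bar n} A_i' + \sum_{j=1}^{\bar m} B_j,
\]
so that $2B + C \equiv 3L$ and $B + 2C \equiv 3M$ for suitable divisors $L, M$ on $\tilde X$. Using $A_i^2 = (A_i')^2 = -2$, $B_j^2 = -3$, $A_i \cdot A_i' = 1$, and adjunction (so $K_{\tilde X} \cdot A_i = K_{\tilde X} \cdot A_i' = 0$ and $K_{\tilde X} \cdot B_j = 1$), a routine calculation gives $(B+C)^2 = -6n - 9m$, $K_{\tilde X}(B+C) = 3m$, and hence $L^2 + M^2 = -4n - 5m$. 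Plugging into the standard formula
\[
\chi(\mathcal O_{\tilde Y_0}) \;=\; 3\chi(\mathcal O_{\tilde X}) + \tfrac{1}{2}\bigl(L(L+K_{\tilde X}) + M(M+K_{\tilde X})\bigr)
\]
for the normal triple cover $\tilde Y_0 \to \tilde X$ yields $3\chi(\tilde X) - 2n - m$. The singularities of $\tilde Y_0$ lie over the points $A_i \cap A_i'$ and are cyclic quotient singularities, hence rational; since $\tilde Y \to Y$ is a sequence of blow-ups of smooth surfaces, $\chi(\mathcal O_Y) = \chi(\mathcal O_{\tilde Y_0}) = 3\chi(\tilde X) - 2n - m$, which is the first identity.

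For $K_Y^2$, I would compute on $X$ directly. Because $\pi\colon Y \to X$ is a finite Galois morphism of degree $3$ with $Y$ smooth and branch locus the finite singular set of $X$ (no element of $\mathbb Z/3$ fixes a curve in $Y$, else $X$ would be singular in codimension one), Riemann–Hurwitz gives $K_Y = \pi^* K_X$ as $\mathbb Q$-Cartier divisors, hence $K_Y^2 = 3K_X^2$. On the base, the $A_2$-points are crepant, while each $\tfrac13(1,1)$-point contributes discrepancy $-\tfrac13$ on its $(-3)$-curve $B_j$, so $\tau^* K_X = K_{\tilde X} + \tfrac13\sum_j B_j$. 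Squaring and using $B_j \cdot B_k = 0$ for $j \neq k$,
\[
K_X^2 \;=\; K_{\tilde X}^2 + \tfrac{2\bar m}{3} - \tfrac{\bar m}{3} \;=\; K_{\tilde X}^2 + m,
\]
whence $K_Y^2 = 3K_{\tilde X}^2 + 3m$.

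The delicate step is justifying the input $K_Y = \pi^* K_X$, i.e.\ the absence of divisorial ramification, which is what makes the given smoothness of $Y$ essential and which is forced by the specific data $(L,M,B,C)$ matched to the $3$-divisibility hypothesis. A hands-on alternative that avoids this input is to work entirely upstairs on $\tilde Y$: compute $K_{\tilde Y_0}^2 = 3\bigl(K_{\tilde X} + \tfrac{2}{3}(B+C)\bigr)^2 = 3K_{\tilde X}^2 - 8n$, apply the $-\tfrac13$-discrepancy at each of the $\bar n$ cyclic singularities of $\tilde Y_0$ (forced to be of type $\tfrac13(1,1)$ by integrality of strict-transform self-intersections) to obtain $K_{\tilde Y}^2 = 3K_{\tilde X}^2 - 9n$, and then contract the $3\bar n + \bar m = 9n + 3m$ exceptional $(-1)$-curves of $\tilde Y \to Y$ --- a chain $(-1,-3,-1)$ over each $A_2$-cusp and a single $(-1)$-curve $\tilde B_j$ over each $\tfrac13(1,1)$-cusp --- to recover $K_Y^2 = 3K_{\tilde X}^2 + 3m$.
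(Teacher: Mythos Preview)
Your argument is correct. The paper's own proof is a two–line citation to Tan: Lemma~2.2.4 of \cite{Tan1} for the pure cusp case $m=0$, and Section~1.3 of \cite{Tan1} for the additional $\tfrac13(1,1)$ contribution. You have unpacked that citation explicitly, and your numerics all check (in particular $L^2+M^2=-4n-5m$, $K_{\tilde X}(L+M)=3m$, and the $\tfrac13(1,1)$ singularities of the normal cover over the nodes $A_i\cap A_i'$, which your integrality argument pins down correctly).

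Where you genuinely diverge from the paper is your first route to $K_Y^2$: pulling back along the finite map $Y\to X$ and doing the discrepancy calculus on $X$ rather than on the cover. This uses the smoothness of $Y$ and the finiteness of the branch locus to get $K_Y=\pi^*K_X$ in one stroke, then reduces everything to the single well-known fact that a $\tfrac13(1,1)$ point has discrepancy $-\tfrac13$. That is cleaner than tracking $\tilde Y_0$, its resolution, and the $(-1,-3,-1)$ chains, and it bypasses Tan's framework entirely for the $K^2$ statement. Your second route is closer to what Tan (and hence the paper) actually does, and serves as a good sanity check; note incidentally that it shows the paper's phrase ``$\tilde Y\to Y$ is a sequence of blowups'' is slightly informal when $n>0$, since one must first resolve the $\tfrac13(1,1)$ points of the normal cover before contracting to $Y$.
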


\begin{proof}
If $m=0,$ this is just \cite[Lemma 2.2.4]{Tan1}.
The contribution of $m\ne 0$ follows easily from \cite[Section 1.3]{Tan1}.
\end{proof}

\subsection{The construction \label{sec:Geometrie}}

Recall that the resolution $\tilde X_1$ of the surface $X_1:=S_1/\sigma$ has a fibration onto $\mathbb P^1$
with singular fibers $F_1,$ $F_2,$ $F_3.$ The $3$-divisibility of the divisors
\[
F_1+2F_2,\ F_1+2F_3,\ F_2+2F_3,\ F_1+F_2+F_3
\]
implies that the following sets of singularities of $X_1$ are $3$-divisible:
\begin{align*}
\Delta_1&=\{O_1',\, O_2',\, O_3',\, Q_1',\, Q_2',\, Q_3'\},\\
\Delta_2&=\{O_1',\, O_2',\, O_3',\, Q_4',\, Q_5',\, Q_6'\},\\
\Delta_3&=\{Q_1', \ldots, Q_6'\},\\
\Delta_4&=\{O_1',\, O_2',\, O_3',\, Q_1', \ldots, Q_6'\}.
\end{align*}
To each $\Delta_i$ corresponds a $\ZZ/3$-covering $X_1(i)\to X_1.$
One has $X_1(4)=S_1,$ the Cartwright-Steger surface. 
The singular set of each $X_1(1), X_1(2)$ is a union of $9$ ordinary cusps, and
the singular set of $X_1(3)$ is a union of $9$ singularities of type $\frac{1}{3}(1,1).$

We now fix $X_2:=X_1(2)$ (or $X_1(1),$ the resulting surfaces have the same invariants).
The resolution $\tilde X_2$ of $X_2$ has three singular fibers $F_1',$ $F_2',$ $F_3'$ which are copies of $F_2.$
Equivalently $X_2$ has three multiple fibers, each containing three cusp singularities of $X_2.$
As above, we can construct three
$\ZZ/3$-coverings of $X_{2},$ each ramified on $6$ cusps, using the $3$-divisible divisors
\[
F_1'+2F_2',\ F_1'+2F_3',\ F_2'+2F_3'.
\]
Fixing any of these, we obtain a new surface $X_{3}$
which has the same special fibers as $X_{2}.$ Repeating this process,
we construct a sequence $(X_{n})$ of surfaces each
containing three special fibers with the same configuration as above.

Let $G_{n+1}$ be the subgroup of $PU(2,1)$ such that $X_{n+1}=\BB/G_{n+1}.$
We define
\[
H_{n+1}:=H_{n}\cap G_{n+1}\ \ \ \ {\rm and}\ \ \ \ S_{n+1}:=\BB/H_{n+1}.
\]
(Notice that it is well defined for all $n\geq 1,$ the group $H_1$ is the fundamental group of the Cartwright-Steger surface $S_1.$)
Since $H_{n}$ and $G_{n+1}$ are index $3$ normal subgroups of $G_n,$ then $H_{n+1}$ is an index $9$ normal subgroup of $G_n.$
We are therefore considering the following diagram
\begin{equation}\label{eq:diagram galois cover}
\begin{array}{ccc}
S_{n+1} & \longrightarrow & X_{n+1}\\
\downarrow &  & \downarrow\\
S_{n} & \longrightarrow & X_{n}
\end{array}
\end{equation}
such that $S_{n+1}\to X_n$ is a $(\ZZ/3)^2$-Galois covering.

We note that each group $H_n$ is torsion free, because $H_1$ is torsion free, hence
$S_n$ is smooth and then the map $S_n\to X_n$ is ramified over the $9$ cusps of $X_n,$ for $n>0.$
Since $(\ZZ/3)^2$ has four $\ZZ/3$ subgroups, the covering $S_{n+1}\to X_n$ has four intermediate surfaces
\[
X_{n+1},\ X_{n+1}',\ X_{n+1}'',\ S_n,
\]
that correspond to index $3$ subgroups
\[
\ G_{n+1},\ G_{n+1}',\ G_{n+1}'',\ H_{n}
\]
of $G_n.$
Clearly the intersection of any two of these groups is $H_{n+1}.$
We have inclusions of normal index $3$ groups:
\[
\begin{array}{ccc}
H_{n+1} & \longrightarrow & G_{n+1}\\
\downarrow &  & \downarrow\\
H_{n} & \longrightarrow & G_{n}
\end{array}.
\]

Let $\tilde X_n\to X_n$ be the smooth minimal resolution of $X_n.$ One has:
\begin{proposition}\label{invs}
The surface $\tilde X_{n}$ is minimal, and for $n>1:$
\begin{itemize}
\item[$a)$] $K_{\tilde X_n}^2=3^{n},\ \chi(\tilde X_n)=3^{n-2}+2;$
\item[$b)$] $K_{S_n}^2=3^{n+1},\ \chi(S_n)=3^{n-1}.$
\end{itemize}
\end{proposition}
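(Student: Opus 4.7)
The plan is to first compute the invariants of $S_n$ by induction, using that $S_{n+1}\to S_n$ is étale, and then to recover the invariants of $\tilde X_n$ from those of $S_n$ via Proposition \ref{invariants} applied to the $\ZZ/3$-cover $S_n\to X_n$. Minimality of $\tilde X_n$ will be deduced from the fact that $K_{S_n}$ is ample.

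For (b), the key observation is that each $H_n\subset PU(2,1)$ is torsion-free, so every nontrivial element of $H_n$ acts freely on $\BB$. A short argument with stabilizers then shows that the induced action of $H_n/H_{n+1}\cong\ZZ/3$ on $S_{n+1}=\BB/H_{n+1}$ is free, so $S_{n+1}\to S_n$ is an étale degree-$3$ cover. Hence $\chi(S_{n+1})=3\chi(S_n)$ and $K_{S_{n+1}}^2=3K_{S_n}^2$. Starting from the Cartwright--Steger values $\chi(S_1)=1$ and $K_{S_1}^2=9$ recorded in Section \ref{CSsurface}, induction yields $\chi(S_n)=3^{n-1}$ and $K_{S_n}^2=3^{n+1}$.

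For (a), I would argue by induction on $n$ that for $n\ge 2$ the singular set of $X_n$ consists of exactly $9$ ordinary cusps, using the explicit description of $X_2=X_1(2)$ in Section \ref{sec:Geometrie} and the fact that each subsequent cover $X_{n+1}\to X_n$ is a triple cover ramified on $6$ of these cusps, which reproduces the same configuration. Since $S_n$ is smooth, the $\ZZ/3$-cover $S_n\to X_n$ must then be ramified over all $9$ cusps, so Proposition \ref{invariants} applied with $\bar n=9$, $\bar m=0$ gives
\[
\chi(S_n)=3\chi(\tilde X_n)-6,\qquad K_{S_n}^2=3K_{\tilde X_n}^2.
\]
Combining with (b) yields $\chi(\tilde X_n)=3^{n-2}+2$ and $K_{\tilde X_n}^2=3^n$, which is (a).

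For minimality, I would use that all singularities of $X_n$ for $n\ge 2$ are canonical ($A_2$), so $K_{\tilde X_n}=\mu^*K_{X_n}$ where $\mu$ is the minimal resolution; moreover $\pi\colon S_n\to X_n$ is ramified only at isolated points, so $K_{S_n}=\pi^*K_{X_n}$ as $\QQ$-divisors. Since $S_n$ is a smooth ball quotient, $K_{S_n}$ is ample, hence $K_{X_n}$ is $\QQ$-ample and $K_{\tilde X_n}$ is nef, ruling out $(-1)$-curves on $\tilde X_n$. The main obstacle, I expect, is the inductive control of the singular set of $X_n$: one must verify that the successive triple covers never introduce singularities of type $\frac{1}{3}(1,1)$, which amounts to checking that the special fibers of the fibration on each $\tilde X_n$ are always of the type of $F_2$ rather than $F_1$ in the notation of Proposition \ref{thm:The-fibration-}. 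Once this is established, the arithmetic in (a) and (b) and the minimality argument follow routinely.
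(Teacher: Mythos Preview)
Your argument is correct, but it proceeds in the opposite order from the paper. The paper first proves (a) by induction along the tower $X_n\to X_{n+1}$: starting from $K_{\tilde X_1}^2=\chi(\tilde X_1)=2$, one application of Proposition~\ref{invariants} with $n=m=1$ gives the invariants of $\tilde X_2$, and then repeated application with $n=2$, $m=0$ (branching over $6$ cusps) yields (a) for all $n>1$. Part (b) is then deduced from (a) via Proposition~\ref{invariants} applied to $S_n\to X_n$ with $n=3$, $m=0$. You instead establish (b) first, using only that $S_{n+1}\to S_n$ is \'etale (since the $H_n$ are torsion-free), so that $\chi$ and $K^2$ scale by $3$; then you invert the same application of Proposition~\ref{invariants} to $S_n\to X_n$ to recover (a). Your route to (b) is arguably more transparent, as it avoids Proposition~\ref{invariants} altogether for that part; the paper's route to (a) is more self-contained, since it does not presuppose (b).

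Two minor remarks. First, the ``main obstacle'' you flag --- that $X_n$ for $n\ge2$ has only $A_2$ singularities and no $\tfrac13(1,1)$ points --- is not really an obstacle: it is built into the construction in Section~\ref{sec:Geometrie}, where one checks once that the special fibres of $\tilde X_2$ are three copies of the $F_2$-type fibre, and the triple cover branched on two of them reproduces the same configuration. Second, you supply a minimality argument (via ampleness of $K_{S_n}$ and the fact that $A_2$ singularities are canonical) that the paper's proof does not spell out; this is a genuine addition and is correct for $n\ge2$.
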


\begin{proof}
We have $K_{\tilde X_1}^2=\chi(\tilde X_1)=2.$ Then Proposition \ref{invariants} with $n=m=1$
gives that $K_{\tilde X_2}^2=9,$ $\chi(\tilde X_2)=3.$ Now Proposition \ref{invariants} with $n=2,$ $m=0$
gives $a).$ Applying Proposition \ref{invariants} to the covering $S_n\to X_n$ ($n=3,$ $m=0$),
we get $b).$
\end{proof}

\begin{corollary}
The surfaces $S_n$ are smooth ball quotients. The surfaces $\tilde X_n$ are on the line
$K^2=9\chi-18$ for $n>1$. In particular, $\lim K_{\tilde X_n}^2/\chi(\tilde X_n)=9.$
\end{corollary}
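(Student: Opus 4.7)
The plan is to split the corollary into its three assertions and dispatch each one, since all the heavy lifting has already been done in Proposition \ref{invs} and in the discussion just preceding it.

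First I would establish smoothness of the $S_n$. The recursive definition $H_{n+1}=H_n\cap G_{n+1}$ gives a descending chain $H_n\subseteq H_{n-1}\subseteq\cdots\subseteq H_1$, and $H_1$ is torsion free (it is the fundamental group of the Cartwright--Steger surface, a smooth ball quotient), so each $H_n$ is torsion free as well. Hence $H_n$ acts freely on $\mathbb{B}$ and $S_n=\mathbb{B}/H_n$ is a smooth ball quotient.

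Next I would verify the line equation $K^2=9\chi-18$ for $n>1$ by substituting the formulas of Proposition \ref{invs}(a). One has
\[
9\chi(\tilde X_n)-18 \;=\; 9\bigl(3^{\,n-2}+2\bigr)-18 \;=\; 9\cdot 3^{\,n-2} \;=\; 3^{n} \;=\; K_{\tilde X_n}^2,
\]
which is the required identity. For the final statement I take the same invariants and compute
\[
\frac{K_{\tilde X_n}^2}{\chi(\tilde X_n)} \;=\; \frac{3^{n}}{3^{\,n-2}+2} \;=\; \frac{9}{1+2\cdot 3^{\,2-n}} \;\xrightarrow[n\to\infty]{}\; 9.
\]

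Since the proof only assembles results already in hand, there is no real obstacle; the only thing to be careful about is the indexing in Proposition \ref{invs}, which is valid for $n>1$ (for $n=1$ the surface $\tilde X_1$ has $K^2=\chi=2$ and does not sit on the stated line).
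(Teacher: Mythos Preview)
Your proposal is correct and mirrors exactly what the paper intends: the corollary is stated without proof, as an immediate consequence of Proposition~\ref{invs}(a),(b) together with the remark that each $H_n\subseteq H_1$ is torsion free. Your verification of the line equation and of the limit is accurate, and your caveat about the $n=1$ case is appropriate.
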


\section{Conclusion}

Starting from the Cartwright-Steger surface $S_1$ and its $\ZZ/3$-quotient $X_1$,
we have constructed two sequences of surfaces $(S_{n}),$
$(X_{n})$ such that the following diagram
\[
\begin{array}{ccc}
S_{n+1} & \longrightarrow & X_{n+1}\\
\downarrow &  & \downarrow\\
S_{n} & \longrightarrow & X_{n}
\end{array}
\]
fits in a $(\ZZ/3)^2$-Galois covering $S_{n+1}\to X_n.$
Moreover, for $n>1$ the invariants of the smooth surfaces $\tilde X_n$ are on the line $$K^2=9\chi-18,$$
and the surfaces $\tilde X_{1},\ldots \tilde X_{5}$ are simply connected. 

Motivated by the simplicity of the geometric construction of the \(\ZZ/3\)-covers
\(X_n \to X_{n-1}\), we formulate:
\begin{conjecture}
All surfaces $\tilde X_{n}$ are simply connected.
\end{conjecture}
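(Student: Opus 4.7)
\medskip

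\noindent\textbf{Proof plan.} The strategy is to reduce the conjecture to a purely group-theoretic statement about the lattices $G_n$ via Armstrong's theorem, and then prove that statement by induction on $n$. Since $H_n$ is torsion-free and $[G_n:H_n]$ is a power of $3$, every torsion element of $G_n$ has order exactly $3$; and since $X_n$ has only cyclic quotient singularities, $\pi_1(\tilde X_n)=\pi_1(X_n)$. Armstrong's theorem applied to the action of $G_n$ on the simply connected ball $\BB$ then yields
\[
\pi_1(\tilde X_n)\;\cong\;G_n/T_n,
\]
where $T_n\triangleleft G_n$ is the normal subgroup generated by the elements of order $3$, equivalently the normal closure of the stabilizers of the $9$ singular points of $X_n$. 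The conjecture thus becomes $G_n=T_n$ for every $n\ge 1$, and the base case $n=1$ has already been verified in Section~2.

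For the inductive step, assume $G_n=T_n$ and let $T_{n+1}\triangleleft G_{n+1}$ be the subgroup generated by the order-$3$ elements of $G_{n+1}$. Since $G_n$ permutes torsion by conjugation and normalizes $G_{n+1}$, the subgroup $T_{n+1}$ is in fact normal in $G_n$, and the quotient $\phi\colon G_n\twoheadrightarrow \ZZ/3=G_n/G_{n+1}$ factors through $G_n/T_{n+1}$. By the geometric construction, the cover $X_{n+1}\to X_n$ is unramified over exactly three of the nine cusps of $X_n$; let $s_1,s_2,s_3\in G_{n+1}$ be the corresponding stabilizers and $s_4,\dots,s_9\in G_n\setminus G_{n+1}$ the six ramified ones. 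The $G_n$-conjugates of $s_1,s_2,s_3$ account for all nine cusps of $X_{n+1}$, so $T_{n+1}$ equals the $G_n$-normal closure of $\{s_1,s_2,s_3\}$. By the inductive hypothesis, $G_n=T_n$ is the $G_n$-normal closure of $\{s_1,\dots,s_9\}$, and so $G_n/T_{n+1}$ is generated by the images $\bar s_4,\dots,\bar s_9$, each of order dividing $3$ and each mapping nontrivially to $\ZZ/3$ via $\phi$. It therefore suffices to show that these six images all coincide up to inversion in $G_n/T_{n+1}$, which will force $G_n/T_{n+1}\cong\ZZ/3$ and hence $G_{n+1}=T_{n+1}$.

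The main obstacle is precisely this last step: producing relations $s_i s_j^{\pm 1}\in T_{n+1}$ for $i,j\in\{4,\dots,9\}$. The six ramified cusps sit in two of the three special fibers of the fibration $\tilde X_n\to\PP^1$, and Proposition~\ref{thm:The-fibration-} (and its inductive analogue for $n>1$) gives rational equivalences among the components of a single fiber; combined with the $\ZZ/3$-symmetry of the construction permuting cusps within a fiber, and with the monodromy of the fibration, these should translate into braid-type relations among the stabilizers. Making this rigorous requires establishing inductively that the explicit fiber/cusp configuration of Section~\ref{CSsurface} propagates unchanged from $X_n$ to $X_{n+1}$, and this structural stability of the ramification picture along the whole tower is, I expect, the heart of the problem. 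A topological reformulation via Seifert--van Kampen applied to the ramified $\ZZ/3$-cover $\tilde X_{n+1}\to \tilde X_n$ (with simply connected base, by induction) is a natural alternative route, but it ultimately hinges on the same geometric stability input.
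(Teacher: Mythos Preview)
This statement is labeled a \emph{conjecture} in the paper, and the authors do not prove it. What they establish is only the cases $n\le 5$, by direct computer verification in Magma that each $G_n$ is generated by its order-$3$ elements (Section~2); for $n\ge 6$ the presentations become too large and the question is left open. There is therefore no proof in the paper to compare your plan against.

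Your reformulation via Armstrong's theorem is correct and is exactly the reduction the paper uses for the small cases: simple connectivity of $\tilde X_n$ is equivalent to $G_n=T_n$. Your inductive framework is also sound, and you are candid that the crux --- producing relations $s_is_j^{\pm1}\in T_{n+1}$ for the six ramified stabilizers --- is not resolved; so this is a plan, not a proof, and the paper offers nothing stronger. One further gap you should flag: from $G_n=T_n$ you only obtain that $G_n/T_{n+1}$ is \emph{normally} generated by $\bar s_4,\dots,\bar s_9$, not generated; and even if these six images all coincided up to inversion, a group normally generated by a single order-$3$ element and surjecting onto $\ZZ/3$ need not itself be $\ZZ/3$ (for instance $A_5\times\ZZ/3$, normally generated by $(\text{3-cycle},1)$). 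To make your sketch go through you would additionally need $\bar s_4$ to be central in $G_n/T_{n+1}$, i.e.\ that every $G_n$-conjugate of each $s_i$ ($i\ge4$) has the same image modulo $T_{n+1}$. Your fibration/monodromy heuristics may well be the right source for such a statement, but as written the plan does not close this gap, and the conjecture remains open.
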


\begin{remark}
In Section \textup{\ref{sec:Geometrie}}, we have chosen to study the tower of surfaces
obtained by branching over cusps. If instead we choose singularities
$\frac{1}{3}(1,1)$, we obtain a sequence of surfaces $(W_{n})$
with invariants on the line $K^2=9\chi-12$.
Moreover, one can compute that the fundamental group of the first few
surfaces $W_{i}$ is $\ZZ/7$. 
\end{remark}

\bibliography{References}

\vspace{1cm}

\noindent Carlos Rito
\vspace{0.1cm}
\\ Centro de Matem\'atica, Universidade do Minho - Polo CMAT-UTAD
\vspace{0.1cm}
\\ Universidade de Tr\'as-os-Montes e Alto Douro, UTAD
\\ Quinta de Prados
\\ 5000-801 Vila Real, Portugal
\vspace{0.1cm}
\\ www.utad.pt, {\tt crito@utad.pt}

\vspace{1cm}

\noindent Xavier Roulleau \vspace{0.1cm} 
\\Université d’Angers,
\\CNRS, LAREMA, SFR MATHSTIC,
\\F-49000 Angers, France
\\ \verb|xavier.roulleau@univ-angers.fr|

\end{document}